\newcommand{\R}{\mathbb{R}}
\DeclareMathOperator*{\osc}{osc}
\DeclareMathOperator*{\esssup}{ess\,sup}
\DeclareMathOperator*{\essinf}{ess\,inf}
\DeclareMathOperator{\Capc}{Cap}
\def\vint_#1{\mathchoice%
          {\mathop{\kern 0.2em\vrule width 0.6em height 0.69678ex depth -0.58065ex
                  \kern -0.8em \intop}\nolimits_{\kern -0.4em#1}}%
          {\mathop{\kern 0.1em\vrule width 0.5em height 0.69678ex depth -0.60387ex
                  \kern -0.6em \intop}\nolimits_{#1}}%
          {\mathop{\kern 0.1em\vrule width 0.5em height 0.69678ex depth -0.60387ex
                  \kern -0.6em \intop}\nolimits_{#1}}%
          {\mathop{\kern 0.1em\vrule width 0.5em height 0.69678ex depth -0.60387ex
                  \kern -0.6em \intop}\nolimits_{#1}}}
\newcommand{\art}[6]{{\sc #1, \rm #2, \it #3 \bf #4 \rm (#5), \mbox{#6}.}}
\newcommand{\book}[3]{{\sc #1, \it #2, \rm #3.}}
\newcommand{\AND}{{\rm and }}
\newcommand{\p}{{$p\mspace{1mu}$}}
\newcommand{\loc}{_{\rm loc}}
\newcommand{\eps}{\varepsilon}
\theoremstyle{plain}
\newtheorem{theorem}[equation]{Theorem}
\newtheorem{lemma}[equation]{Lemma}
\numberwithin{equation}{section}
\theoremstyle{definition}
\theoremstyle{remark}
\newtheorem{remark}[equation]{Remark}
\title{Arithmetic three-spheres theorems for quasilinear Riccati type
  inequalities}
\author{Seppo Granlund} \address[S.G.]{University of Helsinki,
  Department of Mathematics and Statistics, P.O. Box 68, FI-00014
  University of Helsinki, Finland} \email{seppo.granlund@pp.inet.fi}
\author{Niko Marola} \address[N.M.]{University of Helsinki, Department
  of Mathematics and Statistics, P.O. Box 68, FI-00014 University of
  Helsinki, Finland} \email{niko.marola@helsinki.fi}
\date{}
\begin{document}

\keywords{Elliptic differential inequalities, Maximum principle,
  \p-harmonic, \p-Laplace equation, Quasilinear equation, Quantitative
  estimates, Riccati equation, Subsolution, Supersolution, Three
  circles theorem, Three spheres theorem.}

\subjclass[2010]{Primary: 35B05; Secondary: 35J62, 35J92.}

\begin{abstract}
  We consider arithmetic three-spheres inequalities to solutions of
  certain second order quasilinear elliptic differential equations and
  inequalities with a Riccati-type drift term.
\end{abstract}

\maketitle

\section{Introduction and main results}

In this paper we consider the following second order quasilinear
elliptic differential equation with a Riccati-type drift term,
\begin{equation} \label{eq:quasi} -\nabla\cdot \mathcal{A}(x,u,\nabla
  u)+\mathcal{B}(x,u,\nabla u)=0,
\end{equation}
in a domain $G$ of $\R^n$, where $n\geq 2$, and the related pair of
differential inequalities
\begin{align*}
  & -\nabla\cdot \mathcal{A}(x,u,\nabla u)+\mathcal{B}(x,u,\nabla u) \leq 0, \\
  & -\nabla\cdot \mathcal{A}(x,u,\nabla u)+\mathcal{B}(x,u,\nabla u) \geq 0.
\end{align*}
In some sense, solutions to these differential inequalities can be
considered to be sub- and supersolutions to the
equation~\eqref{eq:quasi}. Here $\mathcal{A}$ and $\mathcal{B}$ are
assumed to satisfy certain growth conditions which we specify in
Section~\ref{sect:ests}; see also Section~\ref{sect:Local}. We shall
prove an arithmetic version of the Hadamard three-circles/spheres
theorem for sub- and supersolutions to the equation
\eqref{eq:quasi}. Three-spheres inequalities are central in the
qualitative theory of partial differential equations. As an
application, we obtain a Cauchy--Liouville-type theorem for solutions
to \eqref{eq:quasi} under certain structural assumptions.

The classical Hadamard three-circles theorem for analytic functions
has counterparts for solutions to elliptic differential equations and
inequalities. For instance, suppose that $u$ is a non-constant
$C^2$-smooth $2$-sub\-harmonic function, i.e. $\Delta u \geq 0$, in
the set $\{x\in\R^n:\, |x|< R\}$ and let $M(r)= \sup\{u(x):\, |x|=r\}$
for every $0<r\leq R$. Then by the strong maximum principle $M(r)$ is
a strictly increasing function of $r$. The three-circles theorem, on
the other hand, tells that $M(r)$ is a convex function of $\log r$
when $n=2$, and a convex function of $-r^{2-n}$ when $n>2$; see
Protter--Weinberger~\cite{ProWei}. One application of the theorem is
Liouville's theorem; a uniformly bounded subharmonic function in the
whole $(x,y)$-plane, except possibly at one point, is a constant
\cite{ProWei}.

There are counterparts of the three-circles inequality for solutions
to other elliptic equations. To name a few, we refer to Dow~\cite{Dow},
Landis~\cite{Landis, Landis2}, V\'yborn\'y~\cite{Vyb, Vyb90}, and to a
more recent paper \cite{FrPi} by Fraas and Pinchover and to Miklyukov
et al.~\cite{MRV}, see also the references in these papers.

It is interesting to note that the aforementioned result for
subharmonic functions holds in verbatim also in the case of the
non-linear \p-Laplace equation,
\[
-\nabla\cdot(|\nabla u|^{p-2}\nabla u)=0,
\]
where $1<p<\infty$. This is simply due to the existence of the radial
fundamental solutions $|x|^{(p-n)/(p-1)}$ whenever $1<p<n$, and
$-\log|x|$ for the borderline case $p=n$. Then any function of the
form $a + b|x|^{(p-n)/(p-1)}$, $a,b\in\R$, is a solution to the
\p-Laplace equation in punctured neighborhoods of the origin. Then
the comparison principle gives the following arithmetic version of the
Hadamard three-circles theorem: Suppose that $u$ is a \p-subharmonic
in a domain containing concentric circles of radii $r_1$ and $r_3$ and
the region between them, then for each $r_1<r_2<r_3$
\[
M(r_2) \leq \frac{M(r_1)\log(r_3/r_2)+M(r_3)\log(r_2/r_1)}{\log(r_3/r_1)},
\]
when $p=n$, and in the case in which $1<p<n$ we have
\[
M(r_2) \leq
\frac{M(r_1)\left(r_2^\alpha-r_3^\alpha\right)+M(r_3)\left(r_1^\alpha-r_2^\alpha\right)}{r_1^\alpha-r_3^\alpha},
\]
where $\alpha=(p-n)/(p-1)$. 

Let us also briefly discuss another much stronger form of the
three-spheres inequality for solutions to elliptic equations. In fact,
since there are two different types of three-spheres inequalities, we
shall call the type treated in the present paper {\it arithmetic}
(seen Theorem~\ref{thm:3sphere}) to dissociate it from the other form
of inequalities. Namely, inequalities of the type
\begin{equation} \label{eq:3sphere2}
\|u\|_{B_{r_2}} \leq C\|u\|_{B_{r_1}}^\tau\|u\|_{B_{r_3}}^{1-\tau}, 
\end{equation}
where $0<r_1<r_2<r_3$. Usually, the radius $r_3$ has to be
sufficiently small and $u$ is a $C^2$-solution to an elliptic
differential equation, and $\|u\|$ is the $L^2$ or
$L^\infty$-norm of $u$ on concentric spheres. The constants $C$ and
$\tau\in(0,1)$ depend only on the given elliptic operator and on the
ratios $r_1/r_2$, $r_1/r_3$, and $r_2/r_3$. The classical Hadamard
inequality for analytic functions in an annulus is the inequality
\eqref{eq:3sphere2} with $C=1$ and
$\tau=\log(r_3/r_2)/\log(r_3/r_1)$. For counterparts of the inequality
\eqref{eq:3sphere2} to elliptic equations, see
e.g. Korevaar-Meyers~\cite{KoMe}, Brummelhuis~\cite{Bru}, and Lin et
al. \cite{LNWPisa, LNWRevista}. We also refer to Alessandrini et
al.~\cite{ARRV} for an exhaustive reference list and discussion on the
topic.

In general, three-spheres inequalities of the form \eqref{eq:3sphere2}
do not hold for second order quasilinear elliptic equations of
divergence form. This can be seen from the counterexamples in
\cite{Martio} which concern solutions to the equation \eqref{eq:quasi}
with $\mathcal{B}=0$, $p=n$, and $n\geq 3$ in
\eqref{eq:structuralA}. This phenomenon occurs already in the linear
case as can be deduced from the counterexamples by Pli\'s~\cite{Plis};
in \cite{Plis} the reader can find counterexamples for solutions to
certain second order linear elliptic equations of divergence form with
H\"older continuous coefficients (with any exponent less than one) in
$\R^3$.

In this paper, we are concerned with equations of the general form
\eqref{eq:quasi}, and hence we confine ourselves to the study of an
arithmetic version of the three-spheres theorem. More precisely, we
study the growth of sub- and supersolutions to \eqref{eq:quasi} in
terms of the functions
\[
\mathcal{M}(r) = \esssup\{u(x):\, x\in B_r\}
\]
and
\[
\mathfrak{m}(r) = \essinf\{u(x):\, x\in B_r\},
\]
where $\overline{B}_r\subset G$ is a ball centered at some point in
$G$ and with radius $r$. We discuss the finiteness of these functions
in Remark~\ref{rmk:Finiteness}. As usual, we distinguish three cases
$1<p<n$, $p=n$, and $p>n$ in \eqref{eq:structuralA} and
\eqref{eq:structuralB1}. One of the results in this paper is the
following theorem.

\begin{theorem}[Global arithmetic three-spheres inequality:
  $1<p<n$] \label{thm:3sphere} Suppose that $u$ is a subsolution to
  \eqref{eq:quasi} in $G$ under the structural assumptions
  \eqref{eq:structuralA} and \eqref{eq:structuralB1} for
  $1<p<n$. Assume further that there is a positive number $\tau$ so
  that for every $0<r_1<r_2<r_3$, such that $\overline{B}_{r_3}\subset
  G$, the inequalities $0<\tau\leq r_1/r_2<r_2/r_3<1$ hold. Then
  there exists a constant $0<\lambda<1$, depending only on $n$, $p$,
  $a_0$, $a_1$, $b_1$, and on the ratios $r_1/r_2$, $r_1/r_3$, and
  $r_2/r_3$, for which the inequality
\begin{equation} \label{eq:3sphere}
\mathcal{M}(r_2)\leq \lambda\mathcal{M}(r_1)+(1-\lambda)\mathcal{M}(r_3)
\end{equation}
holds. Also, we obtain the following dual result. If $u$ is a supersolution to
\eqref{eq:quasi} in $G$ the inequality
\begin{equation} \label{eq:3sphereMin} 
\mathfrak{m}(r_2)\geq \lambda\mathfrak{m}(r_1)+(1-\lambda)\mathfrak{m}(r_3)
\end{equation}
holds. The balls $B_{r_1}$, $B_{r_2}$, and $B_{r_3}$ are concentric.
\end{theorem}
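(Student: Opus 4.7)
The strategy is the classical one: compare the subsolution $u$ in the spherical annulus $A:=B_{r_3}\setminus\overline{B}_{r_1}$ with an explicit radial supersolution built from a fundamental-solution-type profile. Set $M_i:=\mathcal{M}(r_i)$; by monotonicity of the essential supremum over nested balls one has $M_1\leq M_2\leq M_3$. The plan is to construct a continuous, strictly increasing radial profile $\phi\colon[r_1,r_3]\to[0,1]$ with $\phi(r_1)=0$ and $\phi(r_3)=1$, and to set
\[
w(x):=(1-\phi(|x|))\,M_1+\phi(|x|)\,M_3,\qquad r_1\leq|x|\leq r_3,
\]
in such a way that $w$ is a supersolution of \eqref{eq:quasi} in $A$. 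Since $u\leq M_1=w$ on $\partial B_{r_1}$ and $u\leq M_3=w$ on $\partial B_{r_3}$ by the definitions of $M_1,M_3$, the comparison principle then gives $u\leq w$ throughout $A$; combined with $u\leq M_1=w(r_1)\leq w(r_2)$ on $B_{r_1}$ and the monotonicity of $w$ in $|x|$, this yields $\mathcal{M}(r_2)\leq w(r_2)=(1-\phi(r_2))M_1+\phi(r_2)M_3$, which is \eqref{eq:3sphere} with $\lambda:=1-\phi(r_2)\in(0,1)$.

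The radial profile $\phi$ is obtained by integrating the structural inequalities. Under \eqref{eq:structuralA} and \eqref{eq:structuralB1}, inserting $w$ into $-\nabla\cdot\mathcal{A}(\cdot,w,\nabla w)+\mathcal{B}(\cdot,w,\nabla w)$ and reducing to the scalar radial profile gives an ordinary differential inequality of the schematic form
\[
-\bigl(r^{n-1}|\phi'|^{p-2}\phi'\bigr)'+K\,r^{n-1}|\phi'|^p\leq 0,
\]
with $K$ depending on $a_0,a_1,b_1,p$. The substitution $g:=(\phi')^{1-p}$ reduces this to a linear first-order ODE, whose explicit solution is a perturbation of the classical $p$-harmonic fundamental-solution profile $r^{(p-n)/(p-1)}$ (available because $1<p<n$). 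Imposing $\phi(r_1)=0$ and $\phi(r_3)=1$ fixes the integration constants and shows that $\phi(r_2)$, and hence $\lambda$, depends only on $n,p,a_0,a_1,b_1$ and on the ratios $r_i/r_j$, as required.

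The chief obstacle is the Riccati drift. With the affine ansatz for $w$, the principal part of the operator scales as $(M_3-M_1)^{p-1}$ while $\mathcal{B}(\cdot,w,\nabla w)$ scales as $(M_3-M_1)^{p}$, so a naive implementation would let $K$, and hence $\lambda$, depend on the oscillation $M_3-M_1$, contrary to the assertion. The way around this is to precede the barrier argument by a Hopf--Cole-type change of variable $v:=(1-e^{-\gamma u})/\gamma$ with $\gamma=\gamma(b_1,p)>0$ chosen so that $v$ obeys a drift-free structural inequality of the same type; the barrier argument applied to $v$ then gives an arithmetic three-spheres inequality for $\mathcal{M}_v$, and Jensen's inequality applied to the convex exponential in the identity $\mathcal{M}_v(r)=(1-e^{-\gamma\mathcal{M}(r)})/\gamma$ converts it back into \eqref{eq:3sphere} for $u$ with the same $\lambda$.

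Finally, the supersolution statement \eqref{eq:3sphereMin} is obtained by applying the subsolution result to $-u$, which is a subsolution to the equation with $\mathcal{A}(x,u,\xi),\mathcal{B}(x,u,\xi)$ replaced by $-\mathcal{A}(x,-u,-\xi),-\mathcal{B}(x,-u,-\xi)$; these satisfy \eqref{eq:structuralA} and \eqref{eq:structuralB1} with the same constants, and the identity $\mathcal{M}(r;-u)=-\mathfrak{m}(r;u)$ converts \eqref{eq:3sphere} into \eqref{eq:3sphereMin}.
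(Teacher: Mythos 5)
Your proposal takes a fundamentally different route from the paper's, and under the hypotheses as stated it has two substantial gaps.

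\textbf{Comparison principle.} The barrier argument rests entirely on deducing $u\leq w$ in the annulus from $u\leq w$ on its boundary. But the paper makes no monotonicity assumption on $\mathcal{A}$ (Section~\ref{sect:ests} states explicitly: ``We do not assume monotonicity of $\mathcal{A}$ since we do not deal with existence problems.''). The structure conditions \eqref{eq:structuralA} alone do not imply a comparison or maximum principle for the operator $-\nabla\cdot\mathcal{A}(x,u,\nabla u)$; one typically needs $(\mathcal{A}(x,t,h_1)-\mathcal{A}(x,t,h_2))\cdot(h_1-h_2)\geq 0$, which is not available. The paper's entire method --- Caccioppoli and De Giorgi-type estimates for the composite $\phi(u)$ with a logarithmic convex $\phi$ satisfying (C-1)--(C-3), combined with a Poincar\'e inequality on the set where $\phi(u)\leq 0$ --- is designed precisely to avoid invoking comparison.

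\textbf{No radial barrier.} Even granting comparison, the profile $w$ is radial while $\mathcal{A}$ is allowed to depend on $x$ in an arbitrary, merely measurable way subject only to \eqref{eq:structuralA}. There is no radial ODE associated with the operator; the step ``reducing to the scalar radial profile gives an ordinary differential inequality'' does not go through, since $-\nabla\cdot\mathcal{A}(x,w,\nabla w)$ cannot be expressed (or even one-sidedly bounded pointwise) in terms of $\phi'(|x|),\phi''(|x|)$ without knowing $\mathcal{A}$ more explicitly. The introduction of the paper flags this exact obstruction: ``fundamental solutions are not available in general... Our method, which is based on a priori estimates with respect to solid balls, compensates the lack of fundamental solutions.''

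\textbf{A smaller point on scaling.} The ``chief obstacle'' you identify is misread: under \eqref{eq:structuralB1} the drift satisfies $|\mathcal{B}(x,t,h)|\leq g(x)|h|^{p-1}$, exponent $p-1$, not $p$. With an affine ansatz in $M_3-M_1$ both the principal part and the drift scale as $(M_3-M_1)^{p-1}$, so the homogeneity mismatch you are trying to cure with a Hopf--Cole change of variable does not occur; and the Hopf--Cole device is in any case tailored to natural growth $|\mathcal{B}|\sim|h|^p$, not $|h|^{p-1}$. (The paper does use a logarithmic change of variable, encoded in $\phi$ and the pointwise inequality $(\phi')^2\leq\phi''$, but it is exploited inside integral Caccioppoli estimates, not in a barrier comparison.)

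\textbf{What does survive.} The reduction of \eqref{eq:3sphereMin} to the subsolution case via $v=-u$, with $\tilde{\mathcal{A}}(x,v,h)=-\mathcal{A}(x,-v,-h)$ and $\tilde{\mathcal{B}}(x,v,h)=-\mathcal{B}(x,-v,-h)$, is correct and the transformed operators do satisfy \eqref{eq:structuralA} and \eqref{eq:structuralB1} with the same constants. (The paper instead argues the supersolution case directly with a dual convex $\phi$ and assumes $\mathfrak{m}(r)>-\infty$ separately; your reduction would in fact supply that finiteness for free via the first part of Remark~\ref{rmk:Finiteness}.) This observation is sound, but the core barrier argument it is attached to is not.
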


We treat the case $p=n$ and $p>n$ in Section~\ref{sect:border} and
Section~\ref{sect:p>n}, respectively.

Compared to the classical three-spheres theorems there are certain
noteworthy differences in our method of proof and also in our
results. We now discuss these aspects in more detail.

Classically, see \cite{ProWei}, the method of proof for three-spheres
theorems is simply based on the usage of radial fundamental solutions
and comparison principle. For quasilinear equations of the form
\eqref{eq:quasi} under the structural assumptions
\eqref{eq:structuralA} and \eqref{eq:structuralB1}, in particular
allowing for dependence on the spatial variable $x$, fundamental
solutions are not available in general. 

Our method, which is based on a priori estimates with respect to solid
balls, compensates the lack of fundamental solutions. However, we are
not able to cover ring shaped domains as in the classical case. Hence,
in Theorem~\ref{thm:3sphere} the growth of sub- and supersolutions is
given in terms of concentric balls instead of concentric spheres.

In our case complexity of the method depends stongly on the structure
parameter $1<p<\infty$. Indeed, as opposed to the case $1<p<n$, when
$p\geq n$ we are able to obtain more explicit formulation for the
convexity/concavity parameter $\lambda$ and, therefore, to attain
Cauchy--Liouville theorems for solutions in the borderline case $p=n$
as in the classical situation.

\medskip

The paper is outlined as follows. After discussing certain preliminary
estimates in Section~\ref{sect:ests}, we shall prove
Theorem~\ref{thm:3sphere} in Section~\ref{sect:proof}. We obtain a
local version of the arithmetic three-spheres inequality for sub- and
supersolutions to the equation which has slightly different structure
as the one in Theorem~\ref{thm:3sphere}, see
Theorem~\ref{thm:3sphereLocal} in Section~\ref{sect:Local}.

In Section~\ref{sect:border} and Section~\ref{sect:p>n} we consider
the borderline case $p=n$ and the case $p>n$ in \eqref{eq:structuralA}
and \eqref{eq:structuralB1}. We obtain a global arithmetic
three-spheres theorem for solutions. However, the method is slightly
different to the one used in connection with Theorem~\ref{thm:3sphere}
in the case $1<p<n$. In the borderline case $p=n$ and in the case
$p>n$, De Giorgi--Ladyzhenskaya--Ural'tseva-type
$L^\infty-L^p$-estimates are replaced by Sobolev's inequality on
spheres as formulated by Gehring and Mostow. By this replacement, we
obtain an explicit formula, up to an absolute constant, for the
parameter $\lambda$ as a function of the ratios $r_1/r_2$, $r_1/r_3$,
and $r_2/r_3$.

In the borderline case $p=n$, as an application of the global
three-spheres inequality with an explicit parameter
$\lambda$, we obtain a Cauchy--Liouville-type result: a bounded entire
solution to \eqref{eq:quasi} under the structure presented in
\eqref{eq:structuralA} and \eqref{eq:structuralB1} must be constant.

\section{Preliminary estimates}
\label{sect:ests}

Let $G$ be a domain in $\R^n$, possibly unbounded, and $n\geq 2$.  For
simplicity and for notational purposes, we assume that $G$ contains
the origin $0$ and that open balls of radius $r$, written as $B_r$,
are centered at $0$. We write the closure of a ball $B_r$ as
$\overline{B}_r$.

The structural conditions \eqref{eq:structuralA} and
\eqref{eq:structuralB1} below ensure that we can consider
\eqref{eq:quasi} in weak form as follows. A function $u\in
W\loc^{1,p}(G)$ is a subsolution [supersolution] of the equation
\eqref{eq:quasi} in $G$ if, and only if, for any relatively compact
$D\subset G$ and any $\eta\in W_0^{1,p}(D)$ with $\eta\geq 0$ in $D$
the inequality
\begin{equation} \label{eq:weakF} \int_D\mathcal{A}(x,u,\nabla
  u)\cdot\nabla\eta\, dx + \int_D \mathcal{B}(x,u,\nabla u)\eta\, dx
  \leq [\geq] 0
\end{equation}
holds; $u$ is a solution in $G$ if the equality holds in
\eqref{eq:weakF} for any relatively compact $D\subset G$ and all
$\eta\in W_0^{1,p}(D)$. Here $\mathcal{A}\colon G\times\R\times\R^n
\to\R^n$ and $\mathcal{B}\colon G\times\R\times\R^n \to \R$ are
assumed to satisfy the Carath\'eodory condition, i.e. each
$A_i(x,t,h)$ ($i=1,\ldots,n$) is measurable in $x\in G$ for every
fixed $(t,h)\in\R\times\R^n$ and continuous in $(t,h)$ for almost
every fixed $x$, and $\mathcal{B}(x,t,h)$ is measurable in $x\in G$
for every fixed $(t,h)\in\R\times\R^n$ and continuous in $(t,h)$ for
almost every fixed $x$. In addition, we shall assume that there are
constants $1<p<n$ and $0<a_0\leq a_1<\infty$ 
such that for all $(t,h)$ in $\R\times\R^n$ and for almost every $x\in
G$ the following structural assumptions apply
\begin{equation} \label{eq:structuralA} \mathcal{A}(x,t,h)\cdot h \geq
  a_0|h|^p, \quad |\mathcal{A}(x,t,h)| \leq a_1|h|^{p-1}.
\end{equation}
For the drift term $\mathcal{B}$ we require it satisfies the following
growth condition: there is a constant
$0<b_1<\infty$ such that for all $(t,h)$ in $\R\times\R^n$ and for
almost every $x\in G$
\begin{equation} \label{eq:structuralB1} |\mathcal{B}(x,t,h)| \leq
  g(x)|h|^{p-1},
\end{equation}
where the function $g:G\to\R$ is defined as
\begin{equation*}
  g(x)=  \left\{\begin{array}{ll}
      b_1 & \textrm{if }\, |x|\leq 1, \\ & \\ 
      \frac{b_1}{|x|} & \textrm{if }\,
      |x|>1.\end{array}\right.
\end{equation*}
By imposing the asymptotic behavior as in \eqref{eq:structuralB1} on
$\mathcal{B}$ we are able to obtain a global three-spheres inequality
in the case of the general form of an elliptic quasilinear
equation. 

We do not assume monotonicity of $\mathcal{A}$ since we do not deal
with existence problems. The prototype operator of $\mathcal{A}$ is
the \p-Laplacian. A growth condition in some ways similar to
\eqref{eq:structuralB1} can be found in \cite{FrPi}, see also Lin et
al.~\cite{LNWRevista} and \cite{LNWPisa}.

We shall use a method exploiting certain convex functions and which
has been previously used to prove oscillation lemmas and H\"older
continuity for solutions to quasilinear elliptic equations
\cite{LaUr}, \cite{DiB}, but which has not been applied to prove
three-spheres inequalities to quasilinear equations with a
Riccati-type drift term \eqref{eq:quasi}.

In what follows, $\phi:\R\to\R$ shall be a convex function satisfying
the following conditions: there exists a sub-interval $I$ of $\R$ such
that
\begin{enumerate}

\item[(C-1)] $\quad \phi\in C^2(I)$; \smallskip

\item[(C-2)] $\quad \phi'(t)^2 \leq \phi''(t) $ for all $t\in I$; \smallskip

\item[(C-3)] $\quad$either (i) $\phi'(t)>0$ or (ii) $\phi'(t)<0$ for
  all $t\in I$.

\end{enumerate}
Positive constants $C$, $C_1$, $C_2$, $C_3$, and $C_4$ depend only on
$n$, $p$, $a_0$, $a_1$, and $b_1$, and may vary from line to
line. Dependence on other parameters than the aforementioned is
written as $C(\eps, \delta)$.

Let us consider some estimates for the composite function $\phi(u)$,
where $u$ is a solution to a differential inequality related to the
equation~\eqref{eq:quasi}.

In regard to the growth condition \eqref{eq:structuralB1} and the
function $g$, we shall next treat only the case in which $|x|>1$ as
the case $|x|\leq 1$ follows easily by modifying the proofs for the
former case; when deriving these a priori estimates in
Lemma~\ref{lemma:1} and Lemma~\ref{lemma:2} and restricting ourselves
to those balls whose radius $r\leq 1$ we can embed the terms caused by
the drift function $\mathcal{B}$ to the corresponding terms produced
by the operator $\mathcal{A}$. See also the discussion in
Section~\ref{sect:Local}.

\begin{lemma} \label{lemma:1} Suppose that $\phi:I\to\R$ is a convex
  function such that the conditions (C-1) through (C-3)(i) listed
  above are valid and that $u$ is a subsolution to \eqref{eq:quasi} in
  $G$, with the structural assumptions \eqref{eq:structuralA} and
  \eqref{eq:structuralB1} for fixed $1<p<n$, such that $u(G)\subset
  I$. There exists a constant $C>0$ such that the inequality
\begin{equation} \label{eq:lemma1}
\int_{B_{(1-\delta)r}}|\nabla \phi(u(x))|^p\, dx \leq \frac{C}{\delta^p}r^{n-p}
\end{equation}
holds for every $\overline{B}_r\subset G$, and $0<\delta<1$.

We obtain an analogous inequality in the case in which $u$ is a
supersolution to \eqref{eq:quasi} and the conditions (C-1) through
(C-3)(ii) are valid for a convex function $\phi:I\to\R$.
\end{lemma}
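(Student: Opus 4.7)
The plan is to run a weighted Caccioppoli-type argument, testing the weak subsolution inequality against a test function which is adapted both to the $p$-growth of $\mathcal{A}$ and to the convexity condition $(\phi')^2\leq\phi''$.

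Let $\zeta\in C_0^\infty(B_r)$ be a standard cutoff with $\zeta\equiv 1$ on $B_{(1-\delta)r}$, $0\leq\zeta\leq 1$, and $|\nabla\zeta|\leq C/(\delta r)$. Under (C-3)(i) we have $\phi'(u)>0$, so the choice
\[
\eta = (\phi'(u))^{p-1}\zeta^p \in W_0^{1,p}(B_r), \qquad \eta\geq 0,
\]
is admissible in \eqref{eq:weakF}. Computing $\nabla\eta=(p-1)(\phi'(u))^{p-2}\phi''(u)\nabla u\,\zeta^p+p(\phi'(u))^{p-1}\zeta^{p-1}\nabla\zeta$ and using the subsolution inequality together with the structural lower bound $\mathcal{A}(x,u,\nabla u)\cdot\nabla u\geq a_0|\nabla u|^p$, I would obtain
\[
(p-1)a_0\int|\nabla u|^p(\phi'(u))^{p-2}\phi''(u)\,\zeta^p\,dx \leq p\,a_1\int|\nabla u|^{p-1}(\phi'(u))^{p-1}\zeta^{p-1}|\nabla\zeta|\,dx + \int g(x)|\nabla u|^{p-1}(\phi'(u))^{p-1}\zeta^p\,dx.
\]
The crucial step is now to invoke (C-2): since $\phi'>0$,
\[
(\phi'(u))^{p-2}\phi''(u) \geq (\phi'(u))^{p-2}(\phi'(u))^2 = (\phi'(u))^p,
\]
so the left-hand side controls exactly $\int|\nabla\phi(u)|^p\zeta^p\,dx$.

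Next I apply Young's inequality with exponents $p$ and $p/(p-1)$ to both terms on the right, splitting off one factor of $|\nabla u|\phi'(u)\zeta$ to pair with $|\nabla u|^{p-1}(\phi'(u))^{p-1}\zeta^{p-1}$. Choosing the parameter small enough to absorb an $\varepsilon$-fraction of $\int|\nabla u|^p(\phi'(u))^p\zeta^p\,dx$ back to the left, I am left with
\[
\int|\nabla\phi(u)|^p\zeta^p\,dx \leq C\int|\nabla\zeta|^p\,dx + C\int g(x)^p\zeta^p\,dx.
\]
The first integral is bounded by $C|B_r|/(\delta r)^p\leq C\delta^{-p}r^{n-p}$. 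For the drift term, in the regime $|x|>1$ (which is the only one the lemma treats, per the paragraph preceding the statement) we have $g(x)=b_1/|x|$, and the condition $1<p<n$ ensures
\[
\int_{B_r}g(x)^p\,dx \leq b_1^p\int_{B_r}|x|^{-p}\,dx \leq C\,r^{n-p};
\]
here the integrability of $|x|^{-p}$ at the origin is what requires $p<n$. This finishes \eqref{eq:lemma1}.

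For the supersolution/case (C-3)(ii) dual statement, I would test with $\eta=(-\phi'(u))^{p-1}\zeta^p$, which is again nonnegative; the sign flip in $\nabla\eta$ is absorbed by the reversed inequality in \eqref{eq:weakF}, and the same manipulation goes through verbatim since $(-\phi')^{p-2}\phi''\geq(\phi')^p=|\phi'|^p$ still follows from (C-2). The main obstacle I expect is bookkeeping the interaction between the drift term and the convex weight: one must pair $g(x)|\nabla u|^{p-1}(\phi')^{p-1}$ in Young's inequality so that $\phi'$ appears only in the piece that is absorbed (leaving $g$ alone in the remainder), which is why the exponent $p-1$ on $\phi'$ in the test function is the natural choice, and why the $|x|^{-p}$ tail of $g^p$ must be integrable --- hence the restriction $1<p<n$.
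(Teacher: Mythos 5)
Your proposal reproduces the paper's proof step for step: same test function $\eta=(\phi'(u))^{p-1}\zeta^p$, same use of the structural bounds and of (C-2) to replace $(\phi')^{p-2}\phi''$ by $(\phi')^p$, same Young absorption, same cutoff choice, and the same treatment of the drift via $g(x)^p\leq b_1^p|x|^{-p}$ and $\int_{B_r}|x|^{-p}\,dx\leq Cr^{n-p}$ (which is where $p<n$ enters). The supersolution case via $\eta=(-\phi'(u))^{p-1}\zeta^p$ is also handled identically to the paper.
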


\begin{proof}
  Suppose that $u$ is a subsolution. Let $\xi\in C_0^\infty(B_r)$ be
  non-negative and $\eta(x) = \phi'(u(x))^{p-1}\xi^p(x)$ at $x\in
  B_r$, here $\phi'(t)>0$ for all $t\in I$. By plugging $\eta$ into
  the inequality \eqref{eq:weakF} and using the structural assumptions
  \eqref{eq:structuralA} and \eqref{eq:structuralB1} we have
\begin{align*}
a_0(p-1)\int_{B_r} & |\nabla u|^p\phi'(u(x))^{p-2}\phi''(u(x))\xi^p\, dx \\
& \leq a_1p \int_{B_r}|\nabla u|^{p-1}\phi'(u(x))^{p-1}\xi^{p-1}|\nabla \xi|\, dx \\
& \qquad \quad + b_1\int_{B_r}|x|^{-1}|\nabla u|^{p-1}\phi'(u(x))^{p-1}\xi^p\, dx.
\end{align*} 
Applying the condition (C-2) on the left-hand side, the preceding
inequality becomes
\begin{align*}
\int_{B_r} & |\nabla u|^p\phi'(u(x))^p\xi^p\, dx \\
& \leq \frac{a_1p}{a_0(p-1)}\int_{B_r}|\nabla u|^{p-1}\phi'(u(x))^{p-1}\xi^{p-1}|\nabla \xi|\, dx \\
& \qquad \quad  + \frac{b_1}{a_0(p-1)}\int_{B_r}|x|^{-1}|\nabla u|^{p-1}\phi'(u(x))^{p-1}\xi^p\, dx.  
\end{align*} 
From Young's inequality with positive $\eps$ we get
\begin{align*}
  \int_{B_r}&|\nabla u|^p\phi'(u(x))^p\xi^p\, dx \\
  & \leq \frac{a_1p\eps}{a_0(p-1)}\int_{B_r}|\nabla u|^p\phi'(u(x))^p\xi^p\, dx +  C_1(\eps)\int_{B_r}|\nabla \xi|^p\, dx \\
  & \qquad  + \frac{b_1\eps}{a_0(p-1)}\int_{B_r}|\nabla u|^p\phi'(u(x))^p\xi^p\, dx + C_2(\eps)\int_{B_r}|x|^{-p}\xi^p\, dx \\
  & \leq C_1\int_{B_r}|\nabla \xi|^p\, dx + C_2r^{n-p},
\end{align*}
where the last inequality is obtained by choosing $\eps$ sufficiently
small so that the first and the third term on the right are absorbed
by the term on the left. The preceding inequality leads to
\eqref{eq:lemma1} by choosing $\xi=1$ on $B_{(1-\delta)r}$, $0\leq
\xi\leq 1$, such that $|\nabla\xi|\leq C/\delta r$ in $B_r$ .

In the case in which $u$ is a supersolution and $\phi'(t)<0$ for all
$t\in I$, at each $x\in B_r$ we set $\eta(x) =
(-\phi'(u(x)))^{p-1}\xi^p(x)$. Then the inequality \eqref{eq:lemma1}
is obtained in the same way as above.
\end{proof}

For any $k\geq 0$, let us write $A_{k,r} = \{x\in B_r:\;
\phi(u(x))>k\}$. We first deduce a Caccioppoli-type estimate on such
level sets of the composite function $\phi(u)$. Then, by referring to
a well known iteration argument, an $L^\infty$--$L^p$ estimate is
obtained.

\begin{lemma} \label{lemma:2} Suppose that $\phi:I\to\R$ is a convex
  function such that the conditions (C-1) through (C-3)(i) listed
  above are valid and that $u$ is a subsolution to \eqref{eq:quasi} in
  $G$ with the structural assumptions \eqref{eq:structuralA} and
  \eqref{eq:structuralB1} such that $u(G)\subset I$. Let
  $\overline{B}_{r_0}\subset G$ and $0<\delta_0<1$ be a fixed
  constant. Assume further that there exists a radius $\hat r>0$ such
  that $\phi(u(x))\leq 0$ at almost every $x\in B_{\hat r}$ and there
  is $\tau$, $0<\tau<1$, such that $\hat r \geq \tau
  (1-\delta_0)r_0$. Then for every $k\geq 0$ and for
  $(1-\delta_0)r_0\leq (1-\delta)r<r\leq r_0$ the inequality
  \begin{equation} \label{eq:lemma2} \int_{A_{k,(1-\delta)r}}|\nabla
    \phi(u(x))|^p\, dx \leq \frac{C(\delta_0,\tau)}{(\delta
      r)^p}\int_{A_{k,r}}(\phi(u(x))-k)^p\, dx
\end{equation}
holds. In addition, the inequality
\begin{equation} \label{eq:lemma2-2} \left(\esssup_{x\in
      B_{(1-\delta_0)r_0}}\phi(u(x))\right)^p \leq
  \frac{C(\delta_0,\tau)}{r_0^n}\int_{A_{0,r_0}}\phi(u(x))^p\, dx
\end{equation}
is valid. All the balls above are concentric.

We obtain analogous inequalities in the case in which $u$ is a
supersolution to \eqref{eq:quasi} and the conditions (C-1) through
(C-3)(ii) are valid for a convex function $\phi:I\to\R$.
\end{lemma}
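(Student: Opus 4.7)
The plan is to derive the Caccioppoli-type inequality \eqref{eq:lemma2} by testing \eqref{eq:weakF} with a composite of $\phi$ and a cutoff, and then to obtain \eqref{eq:lemma2-2} by the standard De~Giorgi--Ladyzhenskaya--Ural'tseva iteration applied to $v:=\phi(u)$. I shall treat only the subsolution case; the supersolution case goes through verbatim with $\phi'(u)^{p-1}$ replaced by $(-\phi'(u))^{p-1}$, which is admissible under (C-3)(ii), and the reversed inequality in \eqref{eq:weakF} compensates the sign of $\phi'$ exactly.

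For \eqref{eq:lemma2}, I would plug
\[
\eta(x) \;=\; \phi'(u(x))^{p-1}\,(v(x)-k)_+\,\xi(x)^p
\]
into \eqref{eq:weakF}, where $\xi\in C_0^\infty(B_r)$ is a cutoff with $\xi\equiv 1$ on $B_{(1-\delta)r}$ and $|\nabla\xi|\leq C/(\delta r)$. Expanding $\mathcal{A}\cdot\nabla\eta$ by chain and product rules produces three pieces: the one involving $\phi''(u)\,\mathcal{A}\cdot\nabla u$ is nonnegative by (C-2) and the first half of \eqref{eq:structuralA} and can be discarded; the piece $\phi'(u)^p\chi_{\{v>k\}}\xi^p(\mathcal{A}\cdot\nabla u)$ is bounded below by $a_0\chi_{\{v>k\}}\xi^p|\nabla v|^p$; and the last piece is estimated above, using $|\mathcal{A}|\leq a_1|\nabla u|^{p-1}$, by $pa_1|\nabla v|^{p-1}(v-k)_+\xi^{p-1}|\nabla\xi|$. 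The drift contributes $|\mathcal{B}\eta|\leq g(x)|\nabla v|^{p-1}(v-k)_+\xi^p$ by \eqref{eq:structuralB1}. The crucial geometric observation is that, since $\phi(u)\leq 0$ a.e.\ on $B_{\hat r}$ and $k\geq 0$, the level set $A_{k,r}$ lies in $B_r\setminus B_{\hat r}$; in particular on its support one has $|x|\geq\hat r\geq\tau(1-\delta_0)r_0\geq\tau(1-\delta_0)r$, so in the regime $|x|>1$ we get $g(x)=b_1/|x|\leq C(\delta_0,\tau)/r$ (the regime $r\leq 1$ is handled by the reduction noted before Lemma~\ref{lemma:1}). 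Applying Young's inequality with small $\eps$ to both mixed terms absorbs the $\xi^p|\nabla v|^p$ contributions back into the left-hand side, and \eqref{eq:lemma2} follows from $|\nabla\xi|^p+\xi^p/r^p\leq C(\delta_0,\tau)/(\delta r)^p$ on the support.

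For \eqref{eq:lemma2-2} I would couple \eqref{eq:lemma2} with the Sobolev embedding $W^{1,p}_0(B_{r_0})\hookrightarrow L^{np/(n-p)}(B_{r_0})$ applied to $(v-k)_+\xi$, and with the elementary level-set inequality
\[
(k'-k)\,|A_{k',\rho}|^{(n-p)/(np)}\;\leq\;\Bigl(\int_{A_{k,\rho}}(v-k)_+^{np/(n-p)}\,dx\Bigr)^{(n-p)/(np)}\qquad (k'>k\geq 0).
\]
These assemble into a recursion $\Psi(k_{j+1},r_{j+1})\leq C^j\,\Psi(k_j,r_j)^{1+\sigma}$ for $\Psi(k,\rho):=\int_{A_{k,\rho}}(v-k)_+^p\,dx$ along the sequences $r_j=(1-\delta_0)r_0+\delta_0 r_0\,2^{-j}$ and $k_j=M(1-2^{-j})$. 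The standard Ladyzhenskaya--Ural'tseva fast-decay lemma (cf.\ \cite{LaUr, DiB}) then forces $\Psi(M,(1-\delta_0)r_0)=0$ once $M^p$ is a sufficiently large multiple of $r_0^{-n}\int_{A_{0,r_0}}v^p\,dx$, which is exactly \eqref{eq:lemma2-2}. The main obstacle is the drift: without the auxiliary hypothesis $\hat r\geq\tau(1-\delta_0)r_0$, the factor $g(x)$ could not be bounded by $C(\delta_0,\tau)/r$ on the support and would escape the absorption into the principal energy; it is this localization of the level set away from the origin that makes the whole argument go through.
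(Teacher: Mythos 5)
Your proof is correct and follows essentially the same path as the paper: same test function $\eta=\psi\,\phi'(u)^{p-1}\xi^p$, same use of (C-2) and the sign/structure conditions to discard the $\phi''$-term and absorb the Young pieces, and the same geometric observation that $\hat r\geq\tau(1-\delta_0)r_0$ confines the level sets away from the origin so that $g(x)^p\lesssim C(\delta_0,\tau)/r^p$ is dominated by the cutoff term $|\nabla\xi|^p$. The only difference is cosmetic: you spell out the Sobolev/level-set recursion behind the De~Giorgi iteration, whereas the paper simply cites the corresponding lemma in Ladyzhenskaya--Ural'tseva.
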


\begin{proof}
  Suppose that $u$ is a subsolution and let $k\geq 0$ be
  arbitrary. Define $\psi(x) = \max\{\phi(u(x))-k,0\}$ at every $x\in
  B_r$, where $r\leq r_0$ with $\overline{B}_{r_0}\subset G$. Let us
  choose $\eta(x) = \psi(x)\phi'(u(x)))^{p-1}\xi^p(x)$, where $\xi\in
  C_0^\infty(B_r)$ is non-negative. 

  By plugging $\eta$ into the inequality \eqref{eq:weakF}, using the
  structural assumptions \eqref{eq:structuralA} and
  \eqref{eq:structuralB1}, and then (C-2) we have
  \begin{align*} 
    & \int_{B_r}|\nabla u|^p\phi'(u(x))^p\xi^p\, dx  + (p-1) \int_{B_r}|\nabla u|^p\psi(x)\phi'(u(x))^p\xi^p\, dx \nonumber \\
    & \leq \frac{a_1p}{a_0}\int_{B_r}|\nabla u|^{p-1}\psi(x)\phi'(u(x))^{p-1}\xi^{p-1}|\nabla \xi|\, dx \\
    & \qquad \qquad + \frac{b_1}{a_0}\int_{B_r}|x|^{-1}|\nabla
    u|^{p-1}\psi(x)\phi'(u(x))^{p-1}\xi^p\, dx.  \nonumber
\end{align*} 
We drop the second term on the left-hand side and use Young's
inequality with $\eps>0$ to the terms on the right-hand side in the
preceding inequality; it becomes
\begin{align*} 
  & \int_{B_r}|\nabla u|^p\phi'(u(x))^p\xi^p\, dx  \leq  \frac{a_1p}{a_0}\eps\int_{B_r}|\nabla u|^p\phi'(u(x))^p\xi^p\, dx \nonumber \\
  & \quad + C_1(\eps)\int_{B_r}\psi(x)^p|\nabla \xi|^p\, dx + \frac{b_1}{a_0}\eps\int_{B_r}|\nabla u|^p\phi'(u(x))^p\xi^p\, dx \nonumber \\
  & \qquad \qquad + C_2(\eps)\int_{B_r}|x|^{-p}\psi(x)^p\xi^p\, dx.
\end{align*} 
We then choose $\eps$ small enough so that the first and the third
term on the right will be absorbed by the term on the left. Then let
us choose the cut-off $\xi$ so that $\xi( x)=1$ for $x\in
B_{(1-\delta)r}$, $0\leq \xi\leq 1$, and $|\nabla\xi| \leq C/\delta r$
on $B_r$. Furthermore, we clearly have that both $\psi(x)=0$ and
$\nabla\psi(x) = 0$ at a.e. $x\in B_r\setminus A_{k,r}$. Also, by the
hypothesis, $\psi(x) = 0$ on $B_{\hat r}$. Altogether, we have
\[
\int_{A_{k,(1-\delta)r}}|\nabla u|^p\phi'(u(x))^p\, dx \leq
\frac{C}{(\delta
  r)^p}\left(1+\frac{\delta_0^p}{\tau^p(1-\delta_0)^p}\right)\int_{A_{k,r}}\psi(x)^p\,
dx,
\]
and hence the desired inequality \eqref{eq:lemma2}.

\medskip

Suppose $u$ is a supersolution. In this case, let us choose
$\eta(x)=\psi(x)(-\phi'(u(x))))^{p-1}\xi^p(x)$ and we shall proceed as
above to obtain the inequality~\eqref{eq:lemma2}.

\medskip

It is well known that an inequality of the form \eqref{eq:lemma2-2}
follows from an inequality of the type \eqref{eq:lemma2} by a
De Giorgi-type iteration argument, see \cite[Lemma~5.4, page 76]{LaUr}.
\end{proof}

\begin{remark} \label{rmk:Finiteness} It follows by a similar argument
  as in Lemma~\ref{lemma:2} with $\phi(t)=t$ that
  $\mathcal{M}(r)<\infty$ whenever $u$ is a subsolution to
  \eqref{eq:quasi} in $G$ and $\overline{B}_r\subset G$ such that
  $r\leq 1$; in this case the extra hypotheses concerning the
  existence of the ball $B_{\hat r}$ and the constant $\tau$ in
  Lemma~\ref{lemma:2} become obsolete. We may, therefore, conclude by
  a covering argument that $\mathcal{M}(r)<\infty$ for all $r>0$ for
  which $\overline{B}_r\subset G$.

  It can be noted that under our structural assumptions
  \eqref{eq:structuralA} and \eqref{eq:structuralB1} it does not
  necessarily hold that $-u$ is a supersolution to \eqref{eq:quasi}
  whenever $u$ is a subsolution. Hence, to obtain
  \eqref{eq:3sphereMin} we assume that $\mathfrak{m}(r)>-\infty$ for
  all $r>0$ for which $\overline{B}_r\subset G$. This extra hypothesis
  becomes void if we assumed certain homogeneity of the operator
  $\mathcal{B}$. Since this would rule out an interesting set of
  equations, for instance the equation \eqref{eq:Riccati}, we do not
  make such an assumption.
\end{remark}

\section{Theorem~\ref{thm:3sphere}}
\label{sect:proof}

\begin{proof}[Proof of Theorem~\ref{thm:3sphere}]
  We consider first the inequality \eqref{eq:3sphere}. Let $u$ be a
  subsolution and $\eps>0$. We define a convex function $\phi$
  satisfying the conditions (C-1)--(C-3)(i) as
\[
\phi(t) =
-\log\left(\frac{\mathcal{M}(r_3)-t+\eps}{\mathcal{M}(r_3)-\mathcal{M}(r_1)+\eps}\right)
\]
for $t\in(-\infty,\mathcal{M}(r_3)]$. We consider the composite
function $\phi(u(x))$. Define also $\psi(x) =
\max\{\phi(u(x)),0\}$. In what follows, $C$ is always a positive
constant which may vary from line to line and depends only on $n$,
$p$, $a_0$, $a_1$, $b_1$, $r_1/r_2$, $r_1/r_3$, and $r_2/r_3$.

Observe that since $\psi(x)=0$ at each $x\in B_{r_1}$, we have the
Poincar\'e inequality
\[
\int_{B_{(r_2+r_3)/2}}\psi(x)^p\, dx \leq
C\left(\frac{r_2+r_3}{2}\right)^p\int_{B_{(r_2+r_3)/2}}|\nabla\psi(x)|^p\, dx.
\]
We obtain the following $L^\infty$-bound for $\phi(u(x))$ using first
\eqref{eq:lemma2-2} with $r_0 = (r_2+r_3)/2$, $0<\delta_0 =
(r_3-r_2)/(r_2+r_3)<1$, $\hat r = r_1$, and $\tau \leq r_1/r_2$, then
the Poincar\'e inequality, and finally the inequality
\eqref{eq:lemma1},
\begin{align} \label{eq:Linftybound} \left(\esssup_{x\in
      B_{r_2}}\phi(u(x))\right)^p & \leq C\left(\frac{r_2+r_3}{2}\right)^{-n}\int_{A_{0,(r_2+r_3)/2}}\phi(u(x))^p\, dx  \nonumber \\
  & = C\left(\frac{r_2+r_3}{2}\right)^{-n}\int_{B_{(r_2+r_3)/2}}\psi(x)^p\, dx  \nonumber \\
  & \leq
  C\left(\frac{r_2+r_3}{2}\right)^{p-n}\int_{B_{(r_2+r_3)/2}}|\nabla\phi(u(x))|^p\,
  dx \nonumber \\
  & \leq C \nonumber .
\end{align}
The obtained upper bound $C$ is independent of $\eps$. Since $\phi$ is
strictly increasing
\[
\esssup_{x\in B_{r_2}}\phi(u(x)) =
-\log\left(\frac{\mathcal{M}(r_3)-\mathcal{M}(r_2)+\eps}{\mathcal{M}(r_3)-\mathcal{M}(r_1)+\eps}\right),
\]
and further we obtain
\[
\log\left(\frac{\mathcal{M}(r_3)-\mathcal{M}(r_2)+\eps}{\mathcal{M}(r_3)-\mathcal{M}(r_1)+\eps}\right)
\geq -C.
\]
It follows that
\[
\mathcal{M}(r_3)-\mathcal{M}(r_2)+\eps \geq
e^{-C}\left(\mathcal{M}(r_3)-\mathcal{M}(r_1)+\eps\right),
\]
or equivalently
\[
\mathcal{M}(r_2) \leq e^{-C}\mathcal{M}(r_1) + (1-e^{-C})\mathcal{M}(r_3)+(1-e^{-C})\eps,
\]
from which the claim \eqref{eq:3sphere} follows by letting $\eps\to 0$.

\medskip

Suppose that $u$ is a supersolution. To prove the inequality
\eqref{eq:3sphereMin}, let $\eps>0$ and choose a convex function
satisfying the conditions (C-1)--(C-3)(ii) as
\[
\phi(t) =
-\log\left(\frac{t-\mathfrak{m}(r_3)+\eps}{\mathfrak{m}(r_1)-\mathfrak{m}(r_3)+\eps}\right)
\]
for $t\in[\mathfrak{m}(r_3),\infty)$. As above, we consider the
composite function $\phi(u(x))$ and define $\psi(x) =
\max\{\phi(u(x)),0\}$ which also vanishes at each $x\in B_{r_1}$, and
obtain by reasoning as above that for $x\in B_{r_2}$
\[
\phi(u(x)) \leq C,
\]
where the constant $C$ is independent of $\eps$. Hence for each $x\in B_{r_2}$
\[
u(x) + (1-e^{-C})\eps \geq e^{-C}\mathfrak{m}(r_1)+
(1-e^{-C})\mathfrak{m}(r_3).
\]  
We obtain the desired inequality \eqref{eq:3sphereMin} by letting
$\eps\to 0$.
\end{proof}

\section{Local three-spheres theorem}
\label{sect:Local}

In this section, let us consider the equation \eqref{eq:quasi} with
the structural assumptions \eqref{eq:structuralA} and
\begin{equation} \label{eq:structuralB2}
  |\mathcal{B}(x,t,h)| \leq b_1|h|^{p-1}
\end{equation}
for all $(t,h)$ in $\R\times\R^n$ and for almost every $x\in G$, where
$0<b_1<\infty$ and $1<p<n$ are fixed. The prototype equation
becomes
\begin{equation} \label{eq:Riccati}
-\nabla\cdot \mathcal{A}(x,u,\nabla u)= b(x)|\nabla
u|^{p-1},
\end{equation}
where $b:\R^n\to\R$ is a bounded measurable function in $G$. This
equation has been under active consideration, see e.g. \cite{BMMP},
\cite{Martio11}, and \cite{Messano}.

Suppose that $u$ is either a sub- or supersolution to the equation
\eqref{eq:quasi} under the structural conditions
\eqref{eq:structuralA} and \eqref{eq:structuralB2}. Then it is easy to
verify that we are able to obtain an inequality similar to that in
Lemma~\ref{lemma:1} by restricting ourselves to those balls
$\overline{B}_r\subset G$ for which $r\leq 1$. In addition, in this
$r\leq 1$ case the extra hypotheses concerning the existence of the
ball $B_{\hat r}$ and the constant $\tau$ in Lemma~\ref{lemma:2} can
be neglected and again we obtain two estimates similar to those in
Lemma~\ref{lemma:2}. We use the fact that $r\leq 1$ when deriving
these a priori estimates to embed the terms caused by the drift
function $\mathcal{B}$ to the corresponding terms produced by the
operator $\mathcal{A}$.

We omit the proof of the following local result since it resembles
that of Theorem~\ref{thm:3sphere}.

\begin{theorem}[Local arithmetic three-spheres inequality:
  $1<p<n$] \label{thm:3sphereLocal} Suppose that $u$ is a subsolution
  to \eqref{eq:quasi} in $G$ with the structural assumptions
  \eqref{eq:structuralA} and \eqref{eq:structuralB2} for $1<p<n$. For
  every $0<r_1<r_2<r_3\leq 1$, such that $\overline{B}_{r_3}\subset
  G$, there exists a constant $0<\lambda<1$, depending only on $n$,
  $p$, $a_0$, $a_1$, $b_1$, and on the ratios $r_1/r_2$, $r_1/r_3$,
  and $r_2/r_3$, for which the inequality \eqref{eq:3sphere} holds. In
  the case in which $u$ is a supersolution in $G$ the inequality
  \eqref{eq:3sphereMin} is valid.
\end{theorem}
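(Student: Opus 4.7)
The plan is to mirror the proof of Theorem~\ref{thm:3sphere} in Section~\ref{sect:proof}, replacing the two a priori estimates of Section~\ref{sect:ests} by local analogues tailored to the drift bound \eqref{eq:structuralB2} and the restriction $r_3\le 1$. As a preliminary step, I would rerun the computations of Lemma~\ref{lemma:1} and Lemma~\ref{lemma:2}: the only change is that the drift contributes a term of the form $b_1\int|\nabla u|^{p-1}\phi'(u)^{p-1}\xi^p\,dx$ (no $|x|^{-1}$ factor), which after Young's inequality produces a bounded contribution $C(\eps)\int\xi^p\,dx$ (no $|x|^{-p}$ factor). Absorbing the $\eps$-term on the left exactly as in Lemma~\ref{lemma:1} gives
\[
\int_{B_{(1-\delta)r}}|\nabla\phi(u)|^p\,dx\le \frac{C}{\delta^p}\,r^{n-p}+C\,r^n\le \frac{C}{\delta^p}\,r^{n-p},
\]
the last inequality holding whenever $r\le 1$, since then $r^n\le r^{n-p}$. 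The same modification in the derivation of \eqref{eq:lemma2} removes the $|x|^{-p}$ integrand that in the global setting forced the introduction of an auxiliary vanishing ball $B_{\hat r}$ and a parameter $\tau$. Hence those hypotheses are no longer needed, and the De~Giorgi iteration \cite[Lemma~5.4]{LaUr} still delivers
\[
\Bigl(\esssup_{B_{(1-\delta_0)r_0}}\phi(u)\Bigr)^p\le \frac{C(\delta_0)}{r_0^n}\int_{A_{0,r_0}}\phi(u)^p\,dx
\]
for $\overline{B}_{r_0}\subset G$ with $r_0\le 1$.

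With these local estimates in place, the argument of Section~\ref{sect:proof} carries over verbatim. For the subsolution inequality, fix $\eps>0$ and set
\[
\phi(t)=-\log\!\left(\frac{\mathcal{M}(r_3)-t+\eps}{\mathcal{M}(r_3)-\mathcal{M}(r_1)+\eps}\right)
\]
on $(-\infty,\mathcal{M}(r_3)]$; this satisfies (C-1)--(C-3)(i). Apply the local $L^\infty$-estimate with $r_0=(r_2+r_3)/2$ and $\delta_0=(r_3-r_2)/(r_2+r_3)$, then the Poincar\'e inequality on $B_{(r_2+r_3)/2}$ applied to $\psi=\max\{\phi(u),0\}$ (which vanishes on $B_{r_1}$, forcing the constant to depend on the ratio $r_1/r_3$), and finally the local energy bound. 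This chain produces $\esssup_{B_{r_2}}\phi(u)\le C$, with $C$ depending only on $n,p,a_0,a_1,b_1$ and the ratios $r_1/r_2$, $r_1/r_3$, $r_2/r_3$, and \emph{independent of $\eps$}. Exponentiating, rearranging, and letting $\eps\to 0$ yields \eqref{eq:3sphere} with $\lambda=e^{-C}\in(0,1)$. The supersolution case is identical with the dual choice
\[
\phi(t)=-\log\!\left(\frac{t-\mathfrak{m}(r_3)+\eps}{\mathfrak{m}(r_1)-\mathfrak{m}(r_3)+\eps}\right)
\]
on $[\mathfrak{m}(r_3),\infty)$ together with condition (C-3)(ii).

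The main obstacle is really just careful bookkeeping: one has to ensure that in both the energy and Caccioppoli steps the $C(\eps)\int\xi^p\,dx$ residue produced by Young's inequality is dominated by the leading $(\delta r)^{-p}$-term uniformly under $r\le 1$, without smuggling in hidden dependence on $r_0$. Once this is verified, the logarithmic convex function and the $\eps\to 0$ limit operate exactly as in Theorem~\ref{thm:3sphere}; finiteness of $\mathcal{M}$ needed to define $\phi$ is provided for $r\le 1$ by Remark~\ref{rmk:Finiteness}, while for $\mathfrak{m}$ one imposes $\mathfrak{m}(r_3)>-\infty$ as discussed there.
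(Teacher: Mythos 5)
Your proposal is correct and follows essentially the same route the paper indicates: Section~\ref{sect:Local} explains precisely that the restriction $r_3\le 1$ allows the drift contribution (now without the $|x|^{-1}$ factor) to be embedded into the $\mathcal{A}$-terms via Young's inequality, so that the local analogues of Lemma~\ref{lemma:1} and Lemma~\ref{lemma:2} hold without the auxiliary $\hat r$ and $\tau$ hypotheses, after which the argument of Section~\ref{sect:proof} carries over verbatim. Your bookkeeping of the residue $C(\eps)\int\xi^p\,dx\le Cr^n\le Cr^{n-p}$ for $r\le 1$ is exactly the absorption the authors have in mind, and the remark on finiteness of $\mathcal{M}$ and $\mathfrak{m}$ matches Remark~\ref{rmk:Finiteness}.
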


\section{Three-spheres theorem in the borderline case}
\label{sect:border}

In this section, we let $p=n$ and consider solutions to the equation
\eqref{eq:quasi} under the structure \eqref{eq:structuralA} and
\eqref{eq:structuralB1}. This is the so-called borderline case. We
obtain a global arithmetic three-spheres theorem also in this case,
however, the method is slightly different to the previously
presented. De Giorgi--Ladyzhenskaya--Ural'tseva-type estimates are
replaced by an oscillation lemma due to Gehring and Mostow. With this
replacement we are able to obtain an explicit formula for the
convexity parameter $\lambda$ as a function of the ratios $r_1/r_2$,
$r_1/r_3$, and $r_2/r_3$.

As an application of the global arithmetic three-spheres theorem
with an explicit convexity parameter $\lambda$, we obtain a
Cauchy--Liouville-type result in Theorem~\ref{thm:Liouville}.

In what follows, if $A\subset \R^n$ is a non-empty measurable set and
$\sup_{x\in A}|u(x)|<\infty$, we let
\[
\osc_{A}u = \sup_{x\in A}u(x)-\inf_{x\in A}u(x) 
\]
denote the oscillation of $u$ on the set $A$.

We shall need the following variant of Lemma~\ref{lemma:1}.

\begin{lemma} \label{lemma:3} Suppose that $\phi:I\to\R$ is a
  convex function such that the conditions (C-1) through (C-3) listed
  in Section~\ref{sect:ests} are valid and that $u$ is a solution to
  \eqref{eq:quasi} in $G$ with the structural assumptions
  \eqref{eq:structuralA} and \eqref{eq:structuralB1} for $p=n$ such
  that $u(G)\subset I$. Then for every $B_{r_1}\subset B_{r_2}\subset
  \overline{B}_{r_3}\subset G$ the inequality
  \begin{align} \label{eq:lemma3} & \int_{B_{(r_2+r_3)/2}\setminus
      B_{r_2}}|\nabla \phi(u(x))|^n\, dx \\
    & \qquad \leq C\left(\left(\log\frac{2r_3}{r_2+r_3}\right)^{1-n}
      + \left(\log\frac{r_2}{r_1}\right)^{1-n} + \log
      \frac{r_3}{r_1}\right) \nonumber
\end{align}
holds. The constant $C>0$ depends only on $n$, $a_0$, $a_1$, and
$b_1$.
\end{lemma}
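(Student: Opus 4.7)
The plan is to mimic the proof of Lemma \ref{lemma:1}, but in the borderline exponent $p=n$ this requires a logarithmic cutoff adapted to the annular region $B_{(r_2+r_3)/2}\setminus B_{r_2}$. Since $u$ is a solution, equality holds in \eqref{eq:weakF}, so I may test with $\eta(x) = \pm \phi'(u(x))^{n-1}\xi(x)^n$, the sign chosen to match case (i) or (ii) of (C-3), and $\xi$ a nonnegative Lipschitz cutoff supported in $B_{r_3}$. Expanding $\nabla \eta$, invoking \eqref{eq:structuralA}, \eqref{eq:structuralB1}, and the bound $\phi'(u)^{n-2}\phi''(u) \geq \phi'(u)^n$ coming from (C-2), and then absorbing the resulting $|\nabla u|^n \phi'(u)^n \xi^n$ terms on the right-hand side via Young's inequality with a small parameter, the reasoning of Lemma \ref{lemma:1} produces the Caccioppoli-type bound
\begin{equation*}
\int_{B_{r_3}} |\nabla \phi(u(x))|^n \xi(x)^n\, dx \leq C\int_{B_{r_3}} |\nabla \xi|^n\, dx + C\int_{B_{r_3}} g(x)^n \xi(x)^n\, dx.
\end{equation*}

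Next I would take $\xi$ to be the standard logarithmic cutoff of the $p=n$ theory: $\xi \equiv 1$ on $B_{(r_2+r_3)/2}\setminus B_{r_2}$, $\xi \equiv 0$ outside $B_{r_3}\setminus B_{r_1}$, and on the two transition annuli put
\begin{equation*}
\xi(x) = \frac{\log(|x|/r_1)}{\log(r_2/r_1)} \text{ on } B_{r_2}\setminus B_{r_1}, \qquad \xi(x) = \frac{\log(r_3/|x|)}{\log(2r_3/(r_2+r_3))} \text{ on } B_{r_3}\setminus B_{(r_2+r_3)/2}.
\end{equation*}
On the inner annulus $|\nabla \xi(x)| = 1/(|x|\log(r_2/r_1))$, and analogously on the outer annulus. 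Combined with the identity $\int_{B_R\setminus B_\rho}|x|^{-n}\, dx = \omega_{n-1}\log(R/\rho)$, this yields
\begin{equation*}
\int_{B_{r_3}} |\nabla \xi|^n\, dx \leq C\left(\log\frac{r_2}{r_1}\right)^{1-n} + C\left(\log\frac{2r_3}{r_2+r_3}\right)^{1-n}.
\end{equation*}

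Finally, for the drift integral I would write $g(x) = b_1/\max\{|x|,1\}$ and split $B_{r_3}\setminus B_{r_1}$ into the subregions where $|x|\leq 1$ and $|x|>1$. Using the elementary inequality $1 - t^n \leq n\log(1/t)$ for $0<t\leq 1$ to control the flat part, together with the identity above for the tail, a case analysis over the three configurations of $r_1$ and $r_3$ relative to $1$ delivers the unified bound $\int_{B_{r_3}\setminus B_{r_1}} g(x)^n\, dx \leq C\log(r_3/r_1)$. Combining this with the previous display and the fact that $\xi \equiv 1$ on the target annulus gives \eqref{eq:lemma3}. The only real delicacy is the design of the logarithmic cutoff and the unified treatment of the drift integral across the three size regimes of $r_1$ relative to $1$; otherwise the argument is parallel to that of Lemma \ref{lemma:1}.
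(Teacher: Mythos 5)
Your proposal is correct and follows essentially the same route as the paper: test with $\eta = \pm\phi'(u)^{n-1}\xi^n$, absorb via Young's inequality with (C-2) to reach the Caccioppoli bound, then use a logarithmic cutoff vanishing on $B_{r_1}$ and outside $B_{r_3}$, equal to $1$ on the target annulus. The only difference is presentational: the paper simply declares $\xi$ to be admissible for the conformal capacity and, per its convention from Section~\ref{sect:ests}, writes out only the $|x|>1$ contribution to the drift term, whereas you supply the explicit cutoff formula and spell out the three size regimes of $r_1,r_3$ relative to $1$.
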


\begin{proof}
  The proof is similar to the proof of Lemma~\ref{lemma:1} apart from
  obvious modifications. We obtain, by assuming the condition (C-3)(i)
  and choosing $\eta$ as in the proof of Lemma~\ref{lemma:1},
  \begin{align} \label{eq:ProofLemma3} \int_{B_{r_3}}|\nabla
    u|^n\phi'(u(x))^n\xi^n\, dx & \leq C_1\int_{B_{r_3}}|\nabla
    \xi|^n\,
    dx \\
    & \qquad + C_2\int_{B_{r_3}}|x|^{-n}\xi^n\, dx, \nonumber
\end{align} 
where $C_1$ and $C_2$ are positive constants depending on $n$, $a_0$,
$a_1$, and $b_1$. Let us choose a non-negative cut-off function
$\xi\in C_0^\infty(B_{r_3})$, $0\leq\xi\leq 1$, as follows
\[
\xi(x) = \left\{\begin{array}{ll}
0 & \textrm{if }\, |x|\leq r_1, \\
1 & \textrm{if }\, r_2 <|x| < (r_2+r_3)/2, \\
0 & \textrm{if }\, |x|\geq r_3,
\end{array}\right.
\]
and so that it is admissible for the conformal capacity.
Then the inequality \eqref{eq:ProofLemma3} becomes
\begin{align*}
  \int_{B_{(r_2+r_3)/2}\setminus B_{r_2}} & |\nabla
  u|^n\phi'(u(x))^n\,
  dx 
  \leq C_1\left(\log \frac{2r_3}{r_2+r_3}\right)^{1-n} \\
  & \qquad + C_1\left(\log\frac{r_2}{r_1}\right)^{1-n} + C_2\log
  \frac{r_3}{r_1},
\end{align*}
and hence we obtain the inequality \eqref{eq:lemma3}.
\end{proof}

We have the following global three-spheres inequality with an explicit
formula for the convexity parameter $\lambda$.

\begin{theorem}[Global arithmetic three-spheres inequality:
  $p=n$] \label{thm:3sphereBorder} Suppose that $u$ is a solution to
  \eqref{eq:quasi} in $G$ with the structural assumptions
  \eqref{eq:structuralA} and \eqref{eq:structuralB1} for $p=n$. For
  every $0<r_1<r_2<r_3$, such that $\overline{B}_{r_3}\subset G$,
  there exists a constant $0<\lambda<1$, depending only on $n$, $a_0$,
  $a_1$, $b_1$, and on the ratios $r_1/r_2$, $r_1/r_3$, and $r_2/r_3$,
  for which both the inequality \eqref{eq:3sphere} and
  \eqref{eq:3sphereMin} hold.

  Moreover, in both cases we have the formula
\begin{align} \label{eq:parameter}
  \lambda & =
  \exp\left(-C\left(\left(\log\frac{2r_3}{r_2+r_3}\right)^{1-n} +
      \left(\log\frac{r_2}{r_1}\right)^{1-n} \right. \right. \\
  & \qquad \qquad \qquad \left. \left.+ \log
      \frac{r_3}{r_1}\right)^{1/n}\left(\log\frac{r_2+r_3}{2r_2}\right)^{-1/n}\right),
  \nonumber
\end{align}
where $C>0$ is an absolute constant depending only on $n$, $a_0$, $a_1$, and $b_1$.
\end{theorem}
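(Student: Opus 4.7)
My plan is to adapt the proof of Theorem~\ref{thm:3sphere}, replacing the De Giorgi $L^\infty$--$L^p$ iteration of Lemma~\ref{lemma:2} with a Sobolev-type inequality on concentric spheres in the spirit of Gehring and Mostow. I describe the subsolution side producing~\eqref{eq:3sphere}; the dual bound~\eqref{eq:3sphereMin} follows analogously with the conjugate choice of $\phi$ and condition (C-3)(ii). For $\eps>0$, I would take the same convex function as in the proof of Theorem~\ref{thm:3sphere},
\[
\phi(t)=-\log\left(\frac{\mathcal{M}(r_3)-t+\eps}{\mathcal{M}(r_3)-\mathcal{M}(r_1)+\eps}\right),
\]
on $(-\infty,\mathcal{M}(r_3)]$, which satisfies (C-1)--(C-3)(i) and vanishes at $t=\mathcal{M}(r_1)$, so $\phi(u)\leq 0$ a.e.\ on $B_{r_1}$. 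Lemma~\ref{lemma:3} then delivers the annular bound
\[
\int_{B_{(r_2+r_3)/2}\setminus B_{r_2}}|\nabla\phi(u)|^n\,dx\leq C\,X,
\]
where $X$ denotes the bracketed right-hand side of~\eqref{eq:lemma3}.

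The crux is to convert this $L^n$ gradient bound on the annulus into an oscillation estimate for $\phi(u)$ on a well-chosen sphere inside it. The plan is to apply Morrey's embedding on each sphere $\partial B_\rho$: since $\dim\partial B_\rho=n-1<n$, a scaling argument yields
\[
\bigl(\osc_{\partial B_\rho}\phi(u)\bigr)^n\leq C\rho\int_{\partial B_\rho}|\nabla\phi(u)|^n\,d\mathcal{H}^{n-1}.
\]
Dividing by $\rho$ and integrating in spherical coordinates over $\rho\in(r_2,(r_2+r_3)/2)$ gives
\[
\int_{r_2}^{(r_2+r_3)/2}\bigl(\osc_{\partial B_\rho}\phi(u)\bigr)^n\frac{d\rho}{\rho}\leq C\int_{B_{(r_2+r_3)/2}\setminus B_{r_2}}|\nabla\phi(u)|^n\,dx\leq C\,X.
\]
Since $\int_{r_2}^{(r_2+r_3)/2}d\rho/\rho=\log\frac{r_2+r_3}{2r_2}$, the pigeonhole principle then furnishes some $\rho^*\in(r_2,(r_2+r_3)/2)$ with
\[
\osc_{\partial B_{\rho^*}}\phi(u)\leq C_0:=C\,X^{1/n}\left(\log\frac{r_2+r_3}{2r_2}\right)^{-1/n},
\]
precisely the exponent appearing in~\eqref{eq:parameter}.

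The final step transfers this oscillation bound on $\partial B_{\rho^*}$ to an $L^\infty$ bound on $B_{r_2}\subset B_{\rho^*}$. Here I would invoke the weak maximum and minimum principles for sub- and supersolutions of~\eqref{eq:quasi}, both of which apply since $u$ is a solution. The max principle gives $\esssup_{B_{\rho^*}}u\leq\esssup_{\partial B_{\rho^*}}u$, hence $\phi(\mathcal{M}(r_2))\leq\esssup_{\partial B_{\rho^*}}\phi(u)$; the min principle, combined with the chain $\essinf_{\partial B_{\rho^*}}u\leq\essinf_{B_{\rho^*}}u\leq\essinf_{B_{r_1}}u\leq\mathcal{M}(r_1)$ and monotonicity of $\phi$, gives $\essinf_{\partial B_{\rho^*}}\phi(u)\leq 0$. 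Adding the two bounds yields $\phi(\mathcal{M}(r_2))\leq C_0$; inverting $\phi$ and sending $\eps\to 0$ produces~\eqref{eq:3sphere} with $\lambda=e^{-C_0}$, exactly matching~\eqref{eq:parameter}. The main obstacle I foresee is justifying these weak maximum and minimum principles under the sub-linear drift~\eqref{eq:structuralB1}: although constants solve~\eqref{eq:quasi} and the drift has only $|h|^{p-1}$ growth in the gradient, a short Caccioppoli-style argument or explicit citation is needed to confirm them, together with bookkeeping that the final $C$ in~\eqref{eq:parameter} depends only on $n,a_0,a_1,b_1$.
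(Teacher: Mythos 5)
Your proposal is correct and follows essentially the same approach as the paper: the paper likewise combines Lemma~\ref{lemma:3} with the Gehring--Mostow spherical Sobolev inequality and the same convex substitution $\phi$, the only organizational difference being that the paper uses the monotonicity of $u$ to bound $\osc_{B_{r_2}}\phi(u)\leq\osc_{\partial B_t}\phi(u)$ for \emph{every} $t$ in the annulus and then integrates, rather than pigeonholing one good sphere $\partial B_{\rho^*}$ and transferring afterwards --- the two are logically equivalent. The maximum/minimum principle issue you flag at the end is not a gap but merely requires a citation: the paper establishes that $u$ is monotone in the sense of Lebesgue by combining Trudinger's Harnack inequality \cite{Tru} (applied to $L-u$ and $u-L$, each of which again solves an equation of the same type) with the local H\"older continuity of $u$.
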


\begin{proof}
  We first show that $u$ is monotone in the sense of Lebesgue,
  i.e. that $u$ reaches its extrema on the boundary of any relatively
  compact subdomain $D$ of $G$. We can consider only the maximum
  principle for $u$ as the minimum principle is treated
  similarly. Assume, on the contrary, that $u(x_0)=\max_{x\in
    D}u(x)>\max_{x\in\partial D}u(x)$ for some $x_0\in D$. It follows
  that $L=u(x_0)=\max_{x\in B_\rho(x_0)}u(x)$ for some
  $\overline{B}_\rho(x_0)\subset D$, where $\rho\leq 1$. Then the
  function $v=L-u$ is non-negative in $B_\rho$, and $v(x_0)=0$, and
  also $v$ satisfies an equation similar to \eqref{eq:quasi} with
  analogous structure conditions. Since the Harnack inequality holds
  for the solution $v$ \cite[Theorem 1.1]{Tru}, it follows that $u=L$
  on $B_\rho(x_0)$. It is now easy to see that the set where $u=L$ can
  be expanded to be the whole $D$ which, in turn, leads to a
  contradiction.

  Let us now turn to the proof of the inequality \eqref{eq:3sphere} by
  considering the increasing function $\phi(t)$ as defined in the
  proof of Theorem~\ref{thm:3sphere} for
  $t\in(-\infty,\mathcal{M}(r_3)]$. We consider the composite function
  $\phi\circ u$ which is both monotone and continuous since $u$ can be
  shown to be locally H\"older continuous \cite{Tru}. Then Sobolev's
  inequality on spheres as formulated by Gehring \cite{G} and Mostow
  \cite[Lemma~4.3]{Mostow} applied to $\phi\circ u$ gives the
  inequality
\[
\int_{r_2}^{(r_2+r_3)/2}\left(\osc_{\partial
    B_t}\phi(u(x))\right)^n\frac{dt}{t} \leq C
\int_{B_{(r_2+r_3)/2}\setminus B_{r_2}}|\nabla\phi(u(x))|^n\, dx,
\]
where $C>0$ is a constant depending only on $n$. We have by monotonicity
\begin{equation} \label{eq:GM} \left(\osc_{B_{r_2}}\phi(u)\right)^n
  \leq \frac{C}{\log((r_2+r_3)/2r_2)}\int_{B_{(r_2+r_3)/2}\setminus
    B_{r_2}}|\nabla \phi(u(x))|^n\, dx.
\end{equation}
Recall that $\sup_{x\in B_{r_1}}\phi(u(x))=0$. Now, by combining
\eqref{eq:GM} with the inequality \eqref{eq:lemma3} we have
\begin{align*}
  \sup_{x\in B_{r_2}}\phi(u(x)) & = \sup_{x\in
    B_{r_2}}\phi(u(x)) -\sup_{x\in B_{r_1}}\phi(u(x)) \leq \osc_{B_{r_2}}\phi(u) \\
  & \leq C\left(\log
    \frac{r_2+r_3}{2r_2}\right)^{-1/n}\left(\left(\log\frac{2r_3}{r_2+r_3}\right)^{1-n}
  \right. \\
  & \qquad \qquad \left. + \left(\log\frac{r_2}{r_1}\right)^{1-n} +
    \log \frac{r_3}{r_2}\right)^{1/n}.
\end{align*}
Then as in the proof of Theorem~\ref{thm:3sphere}, we obtain the
inequality \eqref{eq:3sphere}, and analogously the inequality
\eqref{eq:3sphereMin}, with the explicit convexity parameter
$\lambda$.
\end{proof}

Let us state an application of the global arithmetic three-spheres
inequality obtained in Theorem~\ref{thm:3sphereBorder}. We obtain the
following Cauchy--Liouville-type result.

\begin{theorem} \label{thm:Liouville}
Suppose that $u$ is a bounded solution to \eqref{eq:quasi} in $\R^n$
with the structural assumptions \eqref{eq:structuralA} and
\eqref{eq:structuralB1} with $p=n$. Then $u$ is constant.
\end{theorem}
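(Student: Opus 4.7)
The plan is to deduce constancy by combining the subsolution and supersolution three-spheres inequalities into an oscillation estimate and then sending the outer radius to infinity. Since $u$ is a solution, it is simultaneously a subsolution and a supersolution, so Theorem~\ref{thm:3sphereBorder} yields \eqref{eq:3sphere} and \eqref{eq:3sphereMin} with the \emph{same} convexity parameter $\lambda = \lambda(r_1,r_2,r_3)$. Setting $\omega(r) = \mathcal{M}(r) - \mathfrak{m}(r)$ and subtracting gives
\begin{equation*}
\omega(r_2)\leq \lambda\,\omega(r_1)+(1-\lambda)\,\omega(r_3)
\end{equation*}
for every $0<r_1<r_2<r_3$. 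Since $u$ is bounded on $\R^n$, $\omega(r)\to\Omega:=\osc_{\R^n}u<\infty$ as $r\to\infty$, and all quantities in sight are finite.

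Next I examine the explicit formula \eqref{eq:parameter}. Fixing $r_1<r_2$ and letting $r_3\to\infty$, one has $\log(2r_3/(r_2+r_3))\to\log 2$ and $\log(r_2/r_1)$ stays bounded, while both $\log(r_3/r_1)$ and $\log((r_2+r_3)/(2r_2))$ grow like $\log r_3$. Thus the expression inside the $n$-th roots in the numerator is asymptotic to $\log r_3$, and so is the expression inside the $n$-th root in the denominator, so the two $n$-th roots cancel in the limit. This yields $\lambda(r_1,r_2,r_3)\to\lambda_0:=e^{-C}$ as $r_3\to\infty$, where $C$ is the absolute constant of Theorem~\ref{thm:3sphereBorder}. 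Crucially, $\lambda_0$ is strictly positive and does not depend on the choice of $r_1$ or $r_2$. Passing to the limit $r_3\to\infty$ in the oscillation inequality gives
\begin{equation*}
\omega(r_2)\leq \lambda_0\,\omega(r_1)+(1-\lambda_0)\,\Omega\quad\text{for all }0<r_1<r_2.
\end{equation*}

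To finish, I let $r_1\to 0^+$. By the local H\"older continuity of $u$ under the present structural assumptions (as used in the proof of Theorem~\ref{thm:3sphereBorder} via Trudinger's result), $\omega(r_1)\to 0$, so $\omega(r_2)\leq(1-\lambda_0)\Omega$ for every $r_2>0$. Letting $r_2\to\infty$ then gives $\Omega\leq(1-\lambda_0)\Omega$, i.e.\ $\lambda_0\,\Omega\leq 0$; since $\lambda_0>0$, this forces $\Omega=0$, and hence $u$ is constant.

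The main obstacle is the asymptotic analysis of $\lambda$ in the second step; the argument succeeds only because the $\log r_3$ leading terms in the numerator and denominator of \eqref{eq:parameter} match precisely, producing a strictly positive limit that is \emph{uniform} in $r_1$ and $r_2$. In other scaling regimes (for example choosing $r_2=r_3/2$) the denominator stays bounded while the numerator diverges, driving $\lambda$ to $0$ and collapsing the argument; the viable regime is therefore to freeze $r_1<r_2$ and send only $r_3\to\infty$, and then to exploit the resulting inequality by separately sending $r_1\to 0$ and $r_2\to\infty$.
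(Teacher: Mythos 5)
Your proof is correct, and it takes a genuinely different route from the paper's. The paper argues by contradiction: it normalizes $u\geq 0$ with $\inf_{\R^n}u=0$, passes to the limit $r_3\to\infty$ in the one-sided inequality \eqref{eq:3sphere} alone to get $\mathcal{M}(r_2)-(1-\lambda_\infty)M\leq\lambda_\infty\mathcal{M}(r_1)$, and then makes the right-hand side small by \emph{recentering the balls} at a point $x_0$ where $u$ is nearly zero, choosing $r_2$ so that $\mathcal{M}(r_2)$ is close to $M$. The recentering step is delicate: after shifting, the growth condition \eqref{eq:structuralB1} holds only with a new constant $b_1(x_0)$ that grows with $|x_0|$ (so $\lambda_\infty(x_0)$ shrinks), and the paper treats this somewhat informally. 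Your approach sidesteps this entirely: by subtracting the supersolution inequality \eqref{eq:3sphereMin} from the subsolution inequality \eqref{eq:3sphere} -- both of which hold with the \emph{same} explicit $\lambda$ from \eqref{eq:parameter} because a solution is both a sub- and supersolution -- you work with $\omega(r)=\mathcal{M}(r)-\mathfrak{m}(r)$ on balls centered at a single fixed point. The term $\omega(r_1)$ is then killed by sending $r_1\to0$ using continuity at that fixed center (Trudinger's local H\"older continuity, which the paper already invokes), so no recentering is needed. Both proofs hinge on the same asymptotic fact, which you verify correctly: in \eqref{eq:parameter}, the $\log r_3$ leading terms in the quantities raised to the $1/n$ and $-1/n$ powers cancel as $r_3\to\infty$, giving $\lambda\to e^{-C}$ uniformly in $r_1,r_2$. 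The trade-off is that your argument requires both the max and the min inequality (hence genuinely a solution, not merely a subsolution), whereas the paper's uses only the max inequality but pays for it with the center-shift; for the theorem as stated, where $u$ is a solution, your version is the cleaner one.
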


\begin{proof}
  Suppose, on the contrary, that $u$ is a non-constant bounded
  solution in $\R^n$. We may assume that $u$ is non-negative as $u+a$,
  $a>0$, satisfies an equation similar to \eqref{eq:quasi}. By the
  hypothesis, there exists a constant $M$ such that $0\leq u(x)\leq M$
  at every $x\in B_r$, $r>0$; in addition, we can assume that
  $\inf_{x\in\R^n} u(x)=0$. Then by letting $r_3\to\infty$ the
  convexity parameter $\lambda$ in \eqref{eq:parameter} of
  Theorem~\ref{thm:3sphereBorder} tends to some number in $(0,1)$,
  written as $\lambda_\infty$. Indeed, it is straightforward to verify
  that $\lambda_\infty=\exp(-C)$, where $C>0$ is from
  \eqref{eq:parameter} and depends only on $n$, $a_0$, $a_1$, and
  $b_1$. Moreover, the inequality \eqref{eq:3sphere} becomes
  \begin{equation} \label{eq:LiouvilleP}
  \mathcal{M}(r_2)-(1-\lambda_\infty)M \leq
  \lambda_\infty\mathcal{M}(r_1)
  \end{equation}
  and it holds for every $0<r_1<r_2<\infty$. 

  We can choose $r_1$ in such a way that $ \mathcal{M}(r_1)$ becomes
  arbitrarily small, possibly by transferring the origin to some point
  $x_0$. It is easy to check that in this new coordinate system we are
  able to obtain a growth condition similar to the condition
  \eqref{eq:structuralB1} with a function $g(x-x_0)$ and the constant
  $b_1$ depending now on $x_0$.

  We then may choose $r_2$ so that $ \mathcal{M}(r_2)$ is close to
  $M$; choose $r_2$ so that $\mathcal{M}(r_2) = \Theta M$, where
  $\Theta > (1-\lambda_\infty)$. Hence we reach a contradiction in
  \eqref{eq:LiouvilleP}.
\end{proof}

We remark that the preceding Cauchy--Liouville-type result
for solutions to \eqref{eq:quasi} cannot be obtained directly from the
Harnack inequality since for general equations involving a drift term
$\mathcal{B}$ a constant $C$ in the Harnack inequality depends on the
radius of a ball \cite[Theorem 1.1]{Tru}.

\subsection{$\mathcal{A}$-harmonic equation}

Let us consider in passing solutions to the
$\mathcal{A}$-harmonic equation
\[
-\nabla\cdot \mathcal{A}(x,u,\nabla u)=0
\]
with the structural conditions \eqref{eq:structuralA} with $p=n$.

Suppose that $u$ is a solution to the $\mathcal{A}$-harmonic equation
in $G$, and that the conditions (C-1)--(C-3) are valid for a convex
function $\phi:I\to\R$. As in the proof of Lemma~\ref{lemma:3}, by
choosing a non-negative cut-off function $\xi\in C_0^\infty(B_{r_3})$,
$0\leq\xi\leq 1$, such that
\[
\xi(x) = \left\{\begin{array}{ll}
1 & \textrm{if }\, |x| < (r_2+r_3)/2, \\
0 & \textrm{if }\, |x| \geq r_3,
\end{array}\right.
\]
and so that it is admissible for the conformal capacity, we have that
for every $B_{r_1}\subset B_{r_2}\subset\overline{B}_{r_3}\subset G$
the inequality
  \begin{equation*} 
    \int_{B_{(r_2+r_3)/2}\setminus
      B_{r_2}}|\nabla \phi(u(x))|^n\, dx \leq C\left(\log\frac{2r_3}{r_2+r_3}\right)^{1-n}
  \end{equation*}
  holds with a constant $C>0$ depending only on $n$, $a_0$, and
  $a_1$. Then as in the proof of Theorem~\ref{thm:3sphereBorder} it is
  straightforward to verify that for every $0<r_1<r_2<r_3$, such that
  $\overline{B}_{r_3}\subset G$, there exists a constant
  $0<\lambda<1$, depending only on $n$, $a_0$, $a_1$, $r_1/r_2$,
  $r_1/r_3$, and $r_2/r_3$, for which both the inequality
  \eqref{eq:3sphere} and \eqref{eq:3sphereMin} hold. Moreover, the
  convexity parameter becomes
\[
\lambda =
\exp\left(-C\left(\log\frac{2r_3}{r_2+r_3}\right)^{(1-n)/n}\left(\log\frac{r_2+r_3}{2r_2}\right)^{-1/n}\right),
\]
where the constant $C>0$ depends only on $n$, $a_0$, and $a_1$.

Notice that if $u$ is a bounded solution to the $\mathcal{A}$-harmonic
equation in $\R^n$, then by letting $r_3\to \infty$ in
\eqref{eq:3sphere} it can be seen that $\mathcal{M}(r_2)\leq
\mathcal{M}(r_1)$ for all $r_1<r_2$, and moreover that $u$ must be
constant.
 
Analogous results in the case $p>n$ in \eqref{eq:structuralA} can be
also obtained as will be shown in Section~\ref{sect:p>n}.

\section{Three-spheres theorem in the case $p>n$}
\label{sect:p>n}

We close this paper by treating the case $p>n$ in
\eqref{eq:structuralA} and \eqref{eq:structuralB1} in the following
theorem.

\begin{theorem}[Global arithmetic three-spheres inequality:
  $p>n$] \label{thm:3sphere-p>n} Suppose that $u$ is a solution to
  \eqref{eq:quasi} in $G$ with the structural assumptions
  \eqref{eq:structuralA} and \eqref{eq:structuralB1} for $p>n$. For
  every $0<r_1<r_2<r_3$, such that $\overline{B}_{r_3}\subset G$,
  there exists a constant $0<\lambda<1$, depending only on $p$, $n$,
  $a_0$, $a_1$, $b_1$, and on the ratios $r_1/r_2$, $r_1/r_3$, and
  $r_2/r_3$, for which both the inequality \eqref{eq:3sphere} and
  \eqref{eq:3sphereMin} hold. Moreover, the parameter $\lambda$ becomes
\[
\lambda =
\exp\left(-C\Lambda\left(\frac{r_1}{r_2},\frac{r_1}{r_3},\frac{r_2}{r_3}\right)\right),
\]
where the function $\Lambda(r_1/r_2,r_1/r_3,r_2/r_3)$ is as in
\eqref{eq:Lambda} below and $C>0$ is an absolute constant depending
only on $p$, $n$, $a_0$, $a_1$, and $b_1$.
\end{theorem}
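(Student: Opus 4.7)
The plan is to follow the proof of Theorem~\ref{thm:3sphereBorder} closely, retaining the convex function
\[
\phi(t) = -\log\!\left(\frac{\mathcal{M}(r_3)-t+\eps}{\mathcal{M}(r_3)-\mathcal{M}(r_1)+\eps}\right)
\]
and the concluding exponentiation trick, but with two substantive modifications adapted to the range $p>n$. The preliminary step on Lebesgue monotonicity and continuity of $\phi\circ u$ goes through essentially unchanged: Harnack's inequality \cite[Theorem~1.1]{Tru} still yields the maximum principle needed for monotonicity, and continuity of $u$ now follows even more easily from the Morrey--Sobolev embedding $W^{1,p}\hookrightarrow C^{0,1-n/p}$.

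First, I would derive a version of Lemma~\ref{lemma:3} for $p>n$. Testing \eqref{eq:weakF} with $\eta=\phi'(u)^{p-1}\xi^p$ and applying (C-2) and Young's inequality exactly as in the proof of Lemma~\ref{lemma:1}, the task reduces to estimating $\int_{B_{r_3}}|\nabla\xi|^p\,dx$ and $\int_{B_{r_3}}|x|^{-p}\xi^p\,dx$ for a radial cutoff $\xi\in C_0^\infty(B_{r_3})$ that vanishes on $B_{r_1}$ and equals $1$ on $B_{(r_2+r_3)/2}\setminus B_{r_2}$. Since $p\neq n$ the $p$-capacity extremal is built from the powers $|x|^{(p-n)/(p-1)}$ rather than from $\log|x|$; both integrals can then be computed in closed form in terms of $r_1,r_2,r_3$. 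The outcome is an inequality
\[
\int_{B_{(r_2+r_3)/2}\setminus B_{r_2}}|\nabla\phi(u(x))|^p\,dx \leq C\,\Lambda_1\!\left(\tfrac{r_1}{r_2},\tfrac{r_1}{r_3},\tfrac{r_2}{r_3}\right),
\]
where $\Lambda_1$ collects the capacity contributions of the two rings forming the support of $\nabla\xi$ together with the drift contribution.

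Second, the Gehring--Mostow spherical Sobolev inequality, tailored to the conformal case $p=n$, must be replaced by the ordinary Morrey--Sobolev embedding on the $(n-1)$-sphere. Since $p>n>n-1$, this embedding yields
\[
(\osc_{\partial B_t}\phi(u))^p \leq C\,t^{\,p-n+1}\!\int_{\partial B_t}|\nabla\phi(u)|^p\,d\sigma
\]
for each $t\in(r_2,(r_2+r_3)/2)$. Integrating in $t$, using $\osc_{B_{r_2}}\phi(u)\leq \osc_{\partial B_t}\phi(u)$ (a consequence of Lebesgue monotonicity and continuity of $\phi\circ u$), and combining with the first step gives an estimate
\[
(\osc_{B_{r_2}}\phi(u))^p \leq C\,\Lambda\!\left(\tfrac{r_1}{r_2},\tfrac{r_1}{r_3},\tfrac{r_2}{r_3}\right).
\]
Recalling $\sup_{B_{r_1}}\phi(u)=0$ and exponentiating as at the end of the proofs of Theorem~\ref{thm:3sphere} and Theorem~\ref{thm:3sphereBorder} produces \eqref{eq:3sphere} with $\lambda=\exp(-C\Lambda^{1/p})$. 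The supersolution case \eqref{eq:3sphereMin} follows in parallel using the decreasing branch of $\phi$.

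The main technical obstacle will be the bookkeeping required to produce $\Lambda$ explicitly. For $p>n$ the weight $|x|^{-p}$ appearing in \eqref{eq:structuralB1} fails to be integrable at the origin, so it is critical that $\xi$ vanishes on $B_{r_1}$ and that the drift integral be handled separately on $\{|x|\leq 1\}$ and $\{|x|>1\}$, exactly as in the proofs of Lemmas~\ref{lemma:1} and \ref{lemma:3}. Once this decomposition is in place, the dependence of $\Lambda$ on only the three ratios follows by a scaling argument, and the final formula for $\lambda$ is assembled from the capacity exponents produced in the first step combined with the $p$-th root weight produced in the second.
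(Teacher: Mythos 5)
Your proposal is correct and follows essentially the same route as the paper's proof: Lebesgue monotonicity via Harnack, the $p>n$ Sobolev inequality on spheres producing the weight $t^{p-n+1}$ (in place of Gehring--Mostow), a $p$-capacity variant of Lemma~\ref{lemma:3} for the $|\nabla\phi(u)|^p$ integral, and the same exponentiation step. The only slip is notational: the intermediate bound on $\int_{B_{(r_2+r_3)/2}\setminus B_{r_2}}|\nabla\phi(u)|^p\,dx$ scales like $r^{n-p}$ and so cannot itself be written as a function $\Lambda_1$ of the three ratios alone --- scale invariance, and hence the claimed form of $\Lambda$, only emerges after multiplying by the factor $\bigl((r_2+r_3)/2\bigr)^{p-n}\bigl(\log\tfrac{r_2+r_3}{2r_2}\bigr)^{-1}$ coming from the spherical Sobolev step, as the paper's formula \eqref{eq:Lambda} makes explicit.
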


\begin{proof}
  The method of proof is as earlier. Let
  $\phi:(-\infty,\mathcal{M}(r_3)]\to\R$ be a convex function as
  defined in the proof of Theorem~\ref{thm:3sphere}. As in
  Section~\ref{sect:border}, we use Sobolev's inequality on spheres
  (see e.g. \cite[Lemma~2.10]{MZ}) and monotonicity to obtain
\begin{align*}
  \left(\osc_{B_{r_2}}\phi(u)\right)^p & \log\left(\frac{r_2+r_3}{2r_2}\right) \leq
  \int_{r_2}^{(r_2+r_3)/2}\left(\osc_{\partial
      B_t}\phi(u)\right)^p\frac{dt}{t} \\
  & \qquad \leq C\int_{B_{r_2}}^{(r_2+r_3)/2}\left(t^{p-n}\int_{\partial B_t}|\nabla\phi(u(x))|^p\, dS\right)\, dt \\
  & \qquad \leq
  C\left(\frac{r_2+r_3}{2}\right)^{p-n}\int_{B_{(r_2+r_3)/2}\setminus
    B_{r_2}}|\nabla\phi(u(x))|^p\, dx.
\end{align*}
By a straightforward modification of Lemma~\ref{lemma:3}, we have for
every $B_{r_1}\subset B_{r_2}\subset \overline{B}_{r_3}\subset G$
\begin{align*} \label{eq:p>n} \int_{B_{(r_2+r_3)/2}\setminus B_{r_2}} & |\nabla
  \phi(u(x))|^p\, dx \leq C\left(\Capc_p(B_{(r_2+r_3)/2},B_{r_3})\right. \\
  & \qquad + \left.\Capc_p(B_{r_1},B_{r_2}) + r_1^{n-p}-r_3^{n-p}\right),
\end{align*}
where $C>0$ is an absolute constant depending only on $p$, $n$, $a_0$,
$a_1$, and $b_1$. Here $\Capc_p(B_r,B_R)$ denotes the (variational)
\p-capacity of a condenser $(B_r,B_R)$ for which
\[
\Capc_p(B_r,B_R) =
  C\left(R^{(p-n)/(p-1)}-r^{(p-n)/(p-1)}\right)^{1-p},
\]
whenever $0<r<R$ \cite[Section~2.11]{HKM}, and where $C$ is a constant
depending on $p$ and $n$ only. Hence,
\begin{align} 
  & \sup_{x\in B_{r_2}}\phi(u(x)) \leq  C\left(\log\frac{r_2+r_3}{2r_2}\right)^{-1/p}\left(\frac{r_2+r_3}{2}\right)^{1-n/p} \nonumber \\
  & \quad \cdot \biggl\{\Capc_p(B_{(r_2+r_3)/2},B_{r_3}) +
    \Capc_p(B_{r_1},B_{r_2}) +
    r_1^{n-p}-r_3^{n-p}\biggr\}^{1/p} \nonumber \\
  & \quad =:
  C\Lambda\left(\frac{r_1}{r_2},\frac{r_1}{r_3},\frac{r_2}{r_3}\right) \label{eq:Lambda},
\end{align}
where $C>0$ is an absolute constant depending only on $p$, $n$, $a_0$,
$a_1$, and $b_1$. The claim follows.
\end{proof}

By a similar method as in sections \ref{sect:ests} and
\ref{sect:proof}, it is possible to study arithmetic three-spheres
theorems also for sub- and supersolutions in the case $p>n$ as
presented in Theorem~\ref{thm:3sphere} for $1<p<n$. Also, and as in
Theorem~\ref{thm:3sphere}, this approach would give implicit dependence on
the ratios $r_1/r_2$, $r_1/r_3$, and $r_2/r_3$ for the parameter
$\lambda$. However, we do not pursue this approach in this paper.

\end{document}